\def\NAT@def@citea{\def\@citea{\NAT@separator}}
\theoremstyle{plain}
\newtheorem{theorem}{Theorem}[section]
\newtheorem{proposition}[theorem]{Proposition}
\newtheorem{assumption}[theorem]{Assumption}
\theoremstyle{definition}
\theoremstyle{remark}
\newtheorem{remark}{Remark}
\begin{document}


\title{An Efficient Algorithm for Computational Protein Design Problem}

\author{
\name{Yukai Zheng\textsuperscript{a}, Weikun Chen\textsuperscript{b} and Qingna Li\textsuperscript{c}\thanks{CONTACT Qingna Li. Email: qnl@bit.edu.cn}}
\affil{\textsuperscript{a}School of Mathematics and Statistics, Beijing Institute of Technology, Beijing, China; \textsuperscript{b}School of Mathematics and Statistics/Beijing Key Laboratory on MCAACI, Beijing Institute of Technology, Beijing, China; \textsuperscript{c}School of Mathematics and Statistics/Beijing Key Laboratory on MCAACI, Beijing Institute of Technology, Beijing, China}
}

\maketitle

\begin{abstract}
In this paper, we consider the computational protein design (CPD) problem, which is usually modeled as a 0/1 programming and is extremely challenging due to its combinatorial properties. We propose an efficient algorithm for solving it. Specifically, we study the quadratic semi-assignment problem formulation (QSAP) of the CPD problem, and show that it is equivalent to its continuous relaxation problem (RQSAP), in terms of sharing the same optimal objective value. Then, we propose an efficient penalty method to solve the QSAP based on the proposed formulations, which is guaranteed to converge to a global solution of the QSAP under certain conditions. Compared with existing branch-and-bound approaches that suffer from high computational complexity, the proposed algorithm is based on a continuous problem and enjoys a low per-iteration complexity, which makes it particularly suitable for solving large-scale CPD problems. Numerical results on benchmark instances verify the superior performance of our approach over the state-of-the-art branch-and-cut solvers. In particular, the proposed algorithm outperforms the state-of-the-art solvers by three order of magnitude in CPU time in most cases, while it still returns high-quality solutions.
\end{abstract}

\begin{keywords}
Computational protein design; linear programming; quadratic assignment problem; penalty method; projected Newton method
\end{keywords}

\section{Introduction}\label{sec1}

The computational protein design (CPD) problem arises from biology, which attempts to guide the protein design process by producing a set of specific proteins that is not only rich in functional proteins, but also small enough to be evaluated experimentally. In this way, the problem of selecting amino acid sequences to perform a given task can be defined as a computable optimization problem. It is often described as the inverse of the protein folding problem \cite{pabo1983molecular,chiu1998optimizing,yue1992inverse}: the three-dimensional structure of a protein is known, and we need to find the amino acid sequence folded into it \cite{creighton1990protein}.

The challenge of the CPD problem lies in its combinatorial properties over different choices of natural amino acids. The resulting optimization model is usually NP-hard \cite{pierce2002protein,traore2013new}. Existing methods for CPD problems make use of different mathematical models, including probabilistic graphical model \cite{thomas2008protein,gainza2016algorithms}, integer linear programming model \cite{zhu2007mixed,lippow2007progress}, 0/1 quadratic programming model \cite{riazanov2017inverse,forrester2008quadratic} and weighted partial maximum satisfiability problem (MaxSAT) \cite{luo2017ccehc,schiex2014computational}. Various models were proposed in different situations with different scopes. However, due to the exponential complexity, these branch-and-bound approaches cannot solve  large-scale CPD problems.
Therefore, some preprocessing methods were proposed to reduce the problem size and improve the solution efficiency \cite{shah2004preprocessing,allouche2014computational,yanover2007dead}. For example, the dead end elimination (DEE) method \cite{allouche2014computational,yanover2007dead} reduces the problem size by eliminating some selection choices in the combinatorial space which does not contain the optimal solution. Such strategies can speed up the algorithm when sovling the CPD problems \cite{allouche2014computational}, but the worst-case complexity of the algorithm itself has not decreased.  

Our interest in this paper is in the mathematical model for the CPD problem. Note that the CPD problem is essentially an integer programming problem. Among various models for integer programming, assignment models and corresponding algorithms have been widely applied in financial decision making \cite{chen2018outranking}, resources allocation \cite{xian2012application} and especially in solving dynamic traffic problems \cite{florian2008application,lai2018real,szeto2012dynamic}. In \cite{hypergraph}, the authors formulate the hypergraph matching problem as an assignment problem, with a nonlinear objective function. Due to the special structure in hypergraph matching problem, the authors propose a continuous relaxation problem which can also recover the optimal solution of the hypergraph matching problem. The key point of such recovery property lies in the linearity of the objective function with each block of assignment variables. Such favorable property is further explored in \cite{mimo}, where the assignment variables are introduced for Multi-Input-Multi-Output (MIMO) detection problem, and exact recovery result is also established therein. 

Inspired by the work above, we consider the CPD problem as a quadratic semi-assignment problem (QSAP) in this paper. The QSAP enjoys the favorable property as in \cite{hypergraph,mimo}, i.e., the objective function is linear with respect to each block of the assignment variables. With this property, the continuous relaxation problem can be proved to recover the global optimal solution of the QSAP. Compared wth existing branch-and-bound approaches that suffer from high computational complexity, the proposed algoirthm is based on a continuous problem and enjoys a low per-iteration complexity, which makes it particularly suitable for solving large-scale CPD problems. Numerical results verify the efficiency of the proposed algorithm.

The rest of this paper is organized as follows. In Section 2, we formulate the CPD problem. In Section 3, we study an equivalent continuous relaxation problem of the CPD problem. In Section 4, we propose a quadratic penalty method to solve the relaxation problem. In Section 5, we report the numerical results. Final conclusions are made in Section 6.

\section{Problem Formulation}\label{sec2}

\begin{figure}
\centering
\includegraphics[width=0.6\linewidth]{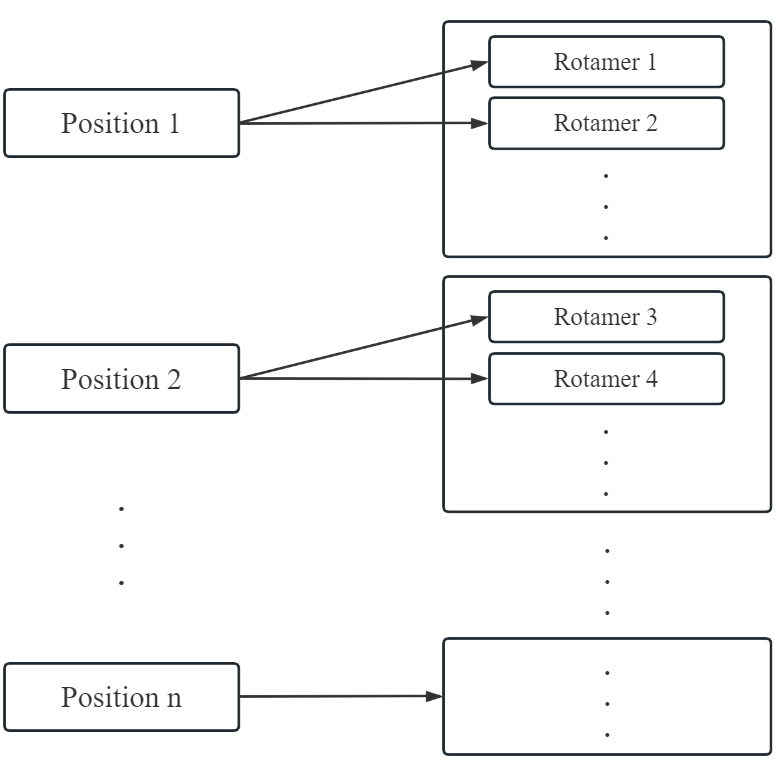}
\caption{The relationship between positions and rotamers in the CPD problem}\label{p_r}
\end{figure}

Briefly, the CPD problem is to select a rotamer among a set of them for each position on the backbone to minimize the total energy, as shown in \textbf{Figure} \ref{p_r}. To be more specific, the CPD problem could be described as the following mathematical model.

Let $n$ be the number of positions on the backbone of the protein, and we use $[n]$ to denote the set of integers $\left\{1,2,...,n\right\}$. $I^{(i)}$ is a set of rotamers that one can choose for position $i\in [n]$, where $i_r$ corresponds to the $r$-th rotamer that can be chosen for position $i$. Let $l_i=\vert I^{(i)}\vert$, that is, the number of elements in the set $I^{(i)}$. Let $m=\sum\limits^n_{i=1}l_i$. Define $x\in\mathbb{R}^m$ as follows:

\begin{equation}\label{vectorx}
x=
\begin{pmatrix}
x^{(1)}\\ \vdots\\x^{(n)}
\end{pmatrix}
=
\begin{pmatrix}
x^{(1)}_1\\ \vdots \\x^{(1)}_{l_1}\\ \vdots\\x^{(n)}_1\\ \vdots \\x^{(n)}_{l_n}
\end{pmatrix}
\in\mathbb{R}^m.\notag
\end{equation}
Here $x^{(i)}\in\mathbb{R}^{l_i}$ is the $i$-th block of the assignment variable $x$, $i\in [n]$, which satisfies
\begin{equation}
x^{(i)}_r=
\begin{cases}
1,\enspace\hbox{if the }r\hbox{-th rotamer is assigned to position }i,\enspace i\in [n],\\
0,\enspace\hbox{otherwise.}\notag
\end{cases}
\end{equation}
The CPD problem is to pick up one specific rotamer for each position of the protein, such that the total energy of the protein is minimized, that is,

\begin{equation}\label{CPD}
\begin{split}
\min\limits_d&\enspace\sum\limits_{i,r}E(i_r)\cdot x^{(i)}_r+\sum\limits_{i,r,j,s,i<j}E(i_r,j_s)\cdot x^{(i)}_r\cdot x^{(j)}_s\\
\hbox{s.t.}&\enspace\sum\limits_{r\in I^{(i)}} x^{(i)}_r=1,\enspace\forall i\in [n],\\
&\enspace x^{(i)}_r\in\left\{0,1\right\},\enspace\forall r\in I^{(i)},\enspace\forall i\in [n],
\end{split}
\end{equation}
where $E(i_r)$ represents the energy contribution of rotamer $r$ at position $i$ capturing internal interactions (and a reference energy for the associated amino acid) or interactions with fixed regions, and $E(i_r,j_s)$ represents the pairwise interaction energy between rotamer $r$ at position $i$ and rotamer $s$ at position $j$ \cite{desmet1992dead}. Both $E(i_r)$ and $E(i_r,j_s)$ are constant and already known for any $i$, $j\in [n]$, $r\in I^{(i)}$, $s\in I^{(j)}$. 


Problem (\ref{CPD}) is a quadratic semi-assignment problem (QSAP), and it can be equivalently expressed in the matrix form as follows.
Define $a\in\mathbb{R}^m$ and $B\in\mathbb{R}^{m\times m}$ as
\begin{equation}\label{vectora}
a=
\begin{pmatrix}
a^{(1)}\\ \vdots\\a^{(n)}
\end{pmatrix}
=
\begin{pmatrix}
a^{(1)}_1\\ \vdots \\a^{(1)}_{l_1}\\ \vdots\\a^{(n)}_1\\ \vdots \\a^{(n)}_{l_n}
\end{pmatrix}
\in\mathbb{R}^m,\enspace B=
\begin{pmatrix}
\bf{0}_{l_1\times l_1} & B_{12} & \cdots & B_{1n} \\
B_{12}^\top  & \bf{0}_{l_2\times l_2} & \cdots & B_{2n} \\
\vdots & \vdots & \ddots & \vdots \\
B_{1n}^\top  & B_{2n}^\top  & \cdots & \bf{0}_{l_n\times l_n} \\
\end{pmatrix}
\in\mathbb{R}^{m\times m},
\end{equation}
where
$a^{(i)}_r=E(i_r)$, $r\in [l_i]$
, $i\in [n]$, and
\begin{equation}
B_{ij}=
\begin{pmatrix}
b^{ij}_{11} & b^{ij}_{12} & \cdots & b^{ij}_{1l_j}\\
b^{ij}_{21} & b^{ij}_{22} & \cdots & b^{ij}_{2l_j}\\
\vdots & \vdots & \ddots & \vdots\\
b^{ij}_{l_i1} & b^{ij}_{l_i2} & \cdots & b^{ij}_{l_il_j}\\
\end{pmatrix}
\in\mathbb{R}^{l_i\times l_j},\enspace b^{ij}_{rs}=E(i_r,j_s),\enspace i<j,\enspace i,\enspace j\in [n].\notag
\end{equation}

Using the above notations, the objective function of the CPD problem can be represented by
\begin{equation}\label{objective}
f(x)=\frac{1}{2}x^\top Bx+a^\top x,
\end{equation}
and therefore the CPD problem (\ref{CPD}) can be equivalently expressed as the following QSAP
\begin{equation}\label{assignment}
\begin{split}
\mathop{\min}_{x\in\mathbb{R}^m}\enspace& f(x)\\
\hbox{s.t.}\enspace&\sum\limits_{r\in [l_i]}x^{(i)}_r=1,\enspace i\in [n],\\
&x^{(i)}_r\in \left\{0,1\right\},\enspace r\in [l_i],\enspace i\in [n].
\end{split}
\end{equation}

\section{An Equivalent Continuous Relaxation Problem}\label{sec3}

In this part, we show the equivalence between the CPD problem (\ref{assignment}) and its continuous relaxation problem. 

Like many other quadratic assignment problems such as the traveling salesman problem \cite{gavish1978travelling}, the bin-packing problem \cite{martello1990bin} and the max clique problem \cite{bomze1999maximum}, the CPD problem (\ref{assignment}) is also NP-hard \cite{pierce2002protein,traore2013new} in general. Therefore, there are no polynomial time algorithm to solve problem (\ref{assignment}), unless P=NP. In the next section, we will propose an efficient algorithm to obtain a high-quality solution of problem (\ref{assignment}) based on its continuous relaxation. 

A natural way to solve problem (\ref{assignment}) is to consider its relaxation problem as follows:
\begin{equation}\label{relaxation}
\begin{split}
\mathop{\min}_{x\in\mathbb{R}^{m}}\enspace&f(x)\\
\hbox{s.t.}\enspace&\sum\limits_{r\in [l_i]}x^{(i)}_r=1,\enspace i\in [n],\\
&x^{(i)}_r\geqslant 0,\enspace r\in [l_i],\enspace i\in [n].
\end{split}
\end{equation}

After relaxing $x^{(i)}_r\in \left\{0,1\right\}$ to $x^{(i)}_r\in [0,1]$, the feasible region in (\ref{relaxation}) becomes larger than that of (\ref{assignment}). Therefore, a natural question is, what is the relationship between the gloabl minimizer of (\ref{assignment}) and the global minimizer of (\ref{relaxation})? To answer it, we first show the following proposition. 

\begin{proposition}\label{proposition1}
$f(x)$ is a linear function with respect to each block $x^{(i)}$, $i\in [n]$, where $f(x)$ is defined as in (\ref{objective}), and $a$ and $B$ are defined as in (\ref{vectora}).
\end{proposition}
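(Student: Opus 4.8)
The plan is to exploit the block structure of $B$ directly. Fix an index $i \in [n]$ and consider $f$ as a function of $x^{(i)}$ alone, treating all other blocks $x^{(j)}$, $j \neq i$, as constants. First I would expand the quadratic form $\frac{1}{2}x^\top B x$ in terms of the blocks, writing it as $\frac{1}{2}\sum_{j,k} (x^{(j)})^\top B_{jk} x^{(k)}$ with the convention $B_{jj} = \mathbf{0}$ and $B_{kj} = B_{jk}^\top$. The key observation is that the diagonal block $B_{ii}$ is the zero matrix, so there is no term of the form $(x^{(i)})^\top B_{ii} x^{(i)}$; every surviving term that involves $x^{(i)}$ involves it \emph{linearly}, paired with some other block $x^{(j)}$, $j \neq i$.

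Concretely, I would collect all terms containing $x^{(i)}$: these are $\frac{1}{2}\sum_{k \neq i}(x^{(i)})^\top B_{ik} x^{(k)} + \frac{1}{2}\sum_{j \neq i}(x^{(j)})^\top B_{ji} x^{(i)}$ from the quadratic part, plus $(a^{(i)})^\top x^{(i)}$ from the linear part. Using $B_{ji} = B_{ij}^\top$, the two quadratic sums are equal, so together they give $\sum_{k \neq i}(x^{(i)})^\top B_{ik} x^{(k)}$. Hence, with all other blocks held fixed,
\begin{equation}\label{linblock}
f(x) = (x^{(i)})^\top c^{(i)} + d^{(i)}, \notag
\end{equation}
where $c^{(i)} = a^{(i)} + \sum_{k \neq i} B_{ik} x^{(k)} \in \mathbb{R}^{l_i}$ and $d^{(i)}$ collects all the terms not involving $x^{(i)}$ (the reference energy plus the interactions among the other blocks). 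Both $c^{(i)}$ and $d^{(i)}$ are constant with respect to $x^{(i)}$, so $f$ is an affine (indeed linear plus constant) function of $x^{(i)}$, which is what the proposition asserts.

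There is no real obstacle here; the only care needed is bookkeeping with the symmetric off-diagonal block indexing so that the factor $\frac{1}{2}$ cancels correctly, and being explicit that "linear with respect to each block" is meant in the affine sense (an additive constant $d^{(i)}$ depending on the other blocks is allowed). I would state the identity for $c^{(i)}$ explicitly, since this gradient-with-respect-to-a-block expression is exactly what will be reused later when setting up the block-coordinate structure of the penalty method.
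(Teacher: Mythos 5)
Your proposal is correct and is essentially the paper's own argument: both hinge on the diagonal blocks $B_{ii}$ being zero, and your coefficient $c^{(i)} = a^{(i)} + \sum_{k\neq i} B_{ik}x^{(k)}$ is exactly the block gradient $\nabla_{x^{(i)}}f(x)$ that the paper computes in (\ref{derivative}) and observes to be independent of $x^{(i)}$. The only cosmetic difference is that you arrive at this by direct expansion of the quadratic form rather than by differentiating, which is the same calculation written in the other direction.
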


\begin{proof}
In fact, $\nabla f(x)=a+Bx$, and $\nabla_{x^{(i)}}f(x)$ takes the following form.
\begin{equation}\label{derivative}
\nabla_{x^{(i)}}f(x)=a^{(i)}+\sum\limits_{j\neq i}B_{ij}x^{(j)}=
\begin{pmatrix}
a^{(i)}_1\\a^{(i)}_2\\ \vdots\\a^{(i)}_{l_i}
\end{pmatrix}
+
\begin{pmatrix}
\sum\limits_{j\neq i}\sum\limits_{s\in [l_j]}b^{ij}_{1s}x^{(j)}_s\\
\sum\limits_{j\neq i}\sum\limits_{s\in [l_j]}b^{ij}_{2s}x^{(j)}_s\\
\cdot \\ \cdot \\ \cdot \\
\sum\limits_{j\neq i}\sum\limits_{s\in [l_j]}b^{ij}_{l_is}x^{(j)}_s\\
\end{pmatrix}
,\enspace i\in [n].
\end{equation}
As shown above, for every $i\in [n]$, the $\nabla_{x^{(i)}}f(x)$ is determined by all the blocks $x^{(j)}$, $j\neq i$, and is not related to the block $x^{(i)}$. In other words, $f(x)$ is a linear function with respect to each block $x^{(i)}$, $i\in [n]$.
\end{proof}

Due to this linear property of $f(x)$ with respect to each block $x^{(i)}$, $i\in [n]$, $f(x)$ can be written as a function of $x^{(i)}$ and $x^{(-i)}$, where
\begin{equation}\label{variable}
x^{(-i)}=
(x^{(1)},...,x^{(i-1)},x^{(i+1)},...,x^{(n)})^\top
\in\mathbb{R}^{m-l_i},\enspace i\in [n].
\end{equation}
More specifically,
\begin{equation}\label{function}
f(x)=\nabla_{x^{(i)}}f(x)^\top x^{(i)}+f_{-i}(x^{(-i)}),\enspace i\in [n].
\end{equation}
Here $\nabla_{x^{(i)}}f(x)$ is only related to $x^{(-i)}$, and $f_{-i}(x^{(-i)})$ represents the part in $f(\cdot)$ which is only related to $x^{(-i)}$. By such particular structure of the objective function, we have the following theorem, which shows that the continuous relaxation problem (\ref{relaxation}) is actually equivalent to the original CPD problem (\ref{assignment}).\\

\begin{theorem}\label{theorem1}
Let $\Vert x\Vert_0$ denote the number of nonzero elements in $x$.
There exists an optimal solution 
\begin{math}
x^*
\end{math}
to the relaxation problem (\ref{relaxation}) such that 
\begin{math}
\Vert x^*\Vert_0=n.
\end{math}
Moreover,
\begin{math}
x^*
\end{math}
is also an optimal solution to the original problem {\rm (\ref{assignment})}.
\end{theorem}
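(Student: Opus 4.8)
The plan is to exploit the block-linearity of $f$ established in Proposition~\ref{proposition1} together with a coordinate-wise rounding argument. Start from any optimal solution $x^*$ of the relaxation problem (\ref{relaxation}); such a solution exists because the feasible region is a nonempty compact polytope (a product of simplices) and $f$ is continuous. The goal is to transform $x^*$, one block at a time, into an optimal solution whose blocks are each a unit coordinate vector, without ever increasing the objective value and without leaving the feasible region.

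The key step is the following block-rounding move. Fix a block index $i\in[n]$ and freeze all other blocks at their current values $x^{(-i)}$. By (\ref{function}), $f$ restricted to the $i$-th block is the affine function $x^{(i)}\mapsto \nabla_{x^{(i)}}f(x)^\top x^{(i)}+f_{-i}(x^{(-i)})$, where the gradient $\nabla_{x^{(i)}}f(x)$ depends only on the frozen variables $x^{(-i)}$ and is therefore a constant vector $c^{(i)}\in\mathbb{R}^{l_i}$ for this subproblem. Minimizing an affine function $\langle c^{(i)},x^{(i)}\rangle$ over the simplex $\{x^{(i)}\geqslant 0,\ \sum_r x^{(i)}_r=1\}$ is a linear program whose optimal value is attained at a vertex, namely at $e_{r^*}$ where $r^*\in\arg\min_r c^{(i)}_r$. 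Since the current $x^{(i)}$ is feasible for this LP, replacing $x^{(i)}$ by $e_{r^*}$ does not increase $f$; and because $x^*$ was globally optimal for (\ref{relaxation}), the value cannot strictly decrease either, so the new point is again a global optimum of (\ref{relaxation}). Crucially, modifying block $i$ does not affect whether block $j\neq i$ is already a unit vector, so we may sweep $i=1,2,\dots,n$ and after $n$ such moves obtain an optimal $x^*$ with every block equal to some $e_{r}$, hence $\Vert x^*\Vert_0 = n$.

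Finally I would note that this fully-rounded $x^*$ is feasible for the original problem (\ref{assignment}): each block sums to $1$ and has entries in $\{0,1\}$. Since the feasible region of (\ref{assignment}) is contained in that of (\ref{relaxation}) and $x^*$ attains on (\ref{relaxation}) an objective value no larger than that of any feasible point of (\ref{assignment}), it follows that $x^*$ is optimal for (\ref{assignment}) as well, with matching optimal values. The only point requiring care — and the place where the block-linearity is indispensable — is the claim that freezing $x^{(-i)}$ turns the $i$-th subproblem into a genuine \emph{linear} program: this is exactly Proposition~\ref{proposition1}, and without it the argument that a vertex is optimal for the subproblem would fail. A minor bookkeeping point is to make sure the sweep is done sequentially (using the updated iterate each time) rather than simultaneously, so that the gradient $c^{(i)}$ used at step $i$ reflects the already-rounded earlier blocks; this is harmless since each individual move preserves global optimality for (\ref{relaxation}).
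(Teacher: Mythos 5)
Your proposal is correct and follows essentially the same route as the paper's own proof: use the block-linearity of $f$ (Proposition~\ref{proposition1}) to view each block's subproblem as a linear program over a simplex, replace the block by an optimal vertex without changing the objective value, and sweep through the blocks to reach an optimal solution with $\Vert x^*\Vert_0=n$ that is feasible, hence optimal, for (\ref{assignment}). Your explicit justification of the final optimality transfer to (\ref{assignment}) is a small but welcome addition that the paper leaves implicit.
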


\begin{proof}
We proceed with the proof by showing that for problem (\ref{relaxation}), there exists a global optimal solution $x^*$ such that each block ${x^*}^{(i)}$ is an extreme point of the simplex set $\Delta_i$ defined by $\Delta_i=\left\{y\in\mathbb{R}^{l_i}\text{ }|\text{ } y^\top e_i=1,y\geqslant0\right\}$, $i\in [n]$, where $e_i\in\mathbb{R}^{l_i}$ is a vector with all entries equal to one. Let $\overline{x}$ be a global optimal solution of problem (\ref{relaxation}). Suppose that there exists one block as $\overline{x}^{(j)}$, such that $\overline{x}^{(j)}$ is not an extreme point of $\Delta_j$, i.e., $\Vert \overline{x}^{(j)}\Vert_0>1$. Clearly, point $\overline{x}^{(j)}$ is an optimal solution of the linear programming problem with simplex constraints
\begin{equation}\label{simplex}
\min\limits_{y\in\Delta_j}\nabla_{x^{(j)}}f(\overline{x})^\top y+f_{-j}(\overline{x}^{(-j)}),
\end{equation}
where $\overline{x}^{(-j)}$ is defined similarly as in (\ref{variable}). From the basic linear programming theory, there must exist an extreme point $\overline{y}\in\Delta_j$ such that $\overline{y}$ is an optimal solution of problem (\ref{simplex}). Then we must have $\nabla_{x^{(j)}}{f(\overline{x})}^\top \overline{y}+f_{-j}(\overline{x}^{(-j)})=f(\overline{x})$. Define a new point $\hat{x}\in\mathbb{R}^m$ by
\begin{equation}
\hat{x}^{(i)}=
\begin{cases}
\overline{y},\text{ if }i=j,\\
\overline{x}^{(i)},\text{ if }i\neq j,\text{ }i\in [n].
\end{cases}
\end{equation}
We have $f(\hat{x})=f(\overline{x})$ and hence $\hat{x}$ is a global minimizer of problem (\ref{relaxation}). If each block in $\hat{x}$, i.e., $\hat{x}^{(i)}$, $i\in [n]$, is an extreme point of the set $\Delta_i$, let $x^*=\hat{x}$. The proof is finished. Otherwise, repeat the above process. After at most $k$ steps $(k\leqslant n)$, one will reach a global minimizer $x^*$, such that each block in $x^*$, namely ${x^*}^{(i)}$, $i\in [n]$, is an extreme point of the set $\Delta_i$. This completes the proof.
\end{proof}

\begin{remark}
The proof of \textbf{Theorem} \ref{theorem1} makes use of the properties of linear programming. Another way to prove the result is to apply Corollary 2 in \cite{hypergraph} as well as \textbf{Proposition} \ref{proposition1}.\\
\end{remark}

\textbf{Theorem} \ref{theorem1} basically reveals that problem (\ref{relaxation}) is a tight continuous relaxation of problem (\ref{assignment}) in the sense that two problems share at least one global minimizer. 

Given a global minimizer of the relaxation problem (\ref{relaxation}), from the proof of \text{Theorem} \ref{theorem1}, we can use the following algorithm to get the global minimizer of the original problem (\ref{assignment}). In this way, we can obtain the solutions of the original CPD problems by just solving their relaxations.

\begin{algorithm}\label{algorithm1}
\caption{Obtain the global minimizer of (\ref{assignment}) by that of (\ref{relaxation})}\label{algorithm1}
\begin{algorithmic}[1]
\Require a global optimal solution $x^0\in\mathbb{R}^m$ of the relaxation problem (\ref{relaxation}).
\Ensure a global optimal solution $\hat{x}\in\mathbb{R}^m$ of the original problem (\ref{assignment}).
\State initialization: $l=0$.
\While{$\Vert x^l\Vert_0>n$}

		for $i=1,...,n$, find the first block of $x^l$, denoted as ${(x^l)}^{(j)}$, such that $\Vert {(x^l)}^{(j)}\Vert_0>1$, choose one index $s^0$ from $\Gamma_j(x^l):=\left\{s:{(x^l)}^{(j)}_s>0\right\}$, and define $x^{l+1}$ as:

		\begin{math}
		(x^{l+1})^{(j)}_s=
		\begin{cases}
		1, &s=s^0,\\
		0, &\hbox{otherwise,}
		\end{cases}
		\enspace {(x^{l+1})}^{(i)}=
		\begin{cases}
		{(x^{l+1})}^{(j)}, &i=j,\\
		{(x^l)}^{(i)}, &\hbox{otherwise.}
		\end{cases}
		\end{math}
        
		$l=l+1$.
\EndWhile\\
output $\hat{x}=x^l.$
\end{algorithmic}
\end{algorithm}

\begin{remark}
For any approximate solution of the relaxation problem (\ref{relaxation}), we can also use \textbf{Algorithm} \ref{algorithm1} to obtain a feasible solution of the original problem (\ref{assignment}). 
\end{remark}

\section{The Numerical Algorithm for Problem (\ref{relaxation})}\label{sec4}

Next we design an efficient algorithm for the relaxation problem (\ref{relaxation}). It should be noticed that the aim we solve (\ref{relaxation}) is to identify the locations of nonzero entries of the global minimizer of (\ref{relaxation}), rather than to find the magnitude of it. This is because once the locations of the nonzero entries are identified, we can apply \textbf{Algorithm} \ref{algorithm1} to obtain a global optimal solution of (\ref{assignment}). From this point of view, keeping the equality constraints in (\ref{relaxation}) may not be necessary. Therefore, we apply the quadratic penalty method to solve (\ref{relaxation}). That is, we penalize the equality constraints to the objective function, and solve the following quadratic penalty subproblem
\begin{equation}\label{penalty}
\min\limits_{x\in\mathbb{R}^m}f(x)+\frac{\sigma}{2}\sum\limits_{i=1}^n\left(\sum\limits_{r\in [l_i]}x^{(i)}_r-1\right)^2\enspace\hbox{s.t. }x\geqslant0,
\end{equation}
where $\sigma>0$ is a penalty parameter. Compared with the relaxation problem (\ref{relaxation}), the penalty subproblem (\ref{penalty}) is much easier to solve since the constraints are greatly simplified again. Up to now, we have transformed the original discrete model (\ref{assignment}) into an equivalent continuous penalty model (\ref{penalty}), which is more amenable to the algorithmic design.

Based on the above, we demonstrate the quadratic algorithm in \textbf{Algorithm} \ref{algorithm2}, which obtains the optimal solution of the relaxation problem (\ref{relaxation}) numerically by solving the penalty subproblem (\ref{penalty}), and then transforms it into the optimal solution of the original problem (\ref{assignment}). It should be emphasized that during the process of solving the subproblem (\ref{penalty}), we only care about identifying the locations of nonzero entries of the global minimizer, which is also a significant part of the projected Newton method \cite{bertsekas1982projected}. As a consequence, we use the projected Newton method to solve the subproblem (\ref{penalty}).

\begin{algorithm}
\caption{Quadratic Penalty Method for (\ref{assignment})}\label{algorithm2}
\begin{algorithmic}[1]
\Require $x^0\geqslant0$, $\sigma_0>0$, $\rho>1$.
\Ensure an optimal solution $\hat{x}\in\mathbb{R}^m$ to the original problem (\ref{assignment}).
\State initialization: choose $x^0\in\mathbb{R}^m_+$, $k:=0$.
\While{the termination condition is not met}

		start from $x^k$, solve problem (\ref{penalty}) with $\sigma := \sigma_k$ to get $x^{k+1}$.

		update $\sigma_{k+1}:=\rho\sigma_k$, $k:=k+1$.
\EndWhile\\
transform $x^k$ into $\hat{x}$ by \textbf{Algorithm} 1.\\
output $\hat{x}$.
\end{algorithmic}
\end{algorithm}

\begin{remark}
In Step 2, we always start from $x^k$ instead of $x^0$, so that we can make better use of the known information and reduce the computational cost. Therefore, \textbf{Algorithm} \ref{algorithm2} enjoys a low per-iteration complexity.\\
\end{remark}

The following theorem addresses the convergence of the quadratic penalty method, which can be found in classic optimization books such as \cite{jorge2006numerical} (Theorem 17.1) and \cite{sun2006optimization} (Corollary 10.2.6). Therefore, the proof is omitted.

\begin{theorem}\label{theorem2}
Let $\left\{x^k\right\}$ be generated by \textbf{Algorithm} \ref{algorithm2}, and $\lim_{k\to\infty}\sigma_k=+\infty$. If each $x^{k+1}$ is a global minimizer of {\rm (\ref{penalty})}, then any accumulation point of the generated sequence $\left\{x^k\right\}$ is a global optimal solution of the relaxation problem {\rm (\ref{relaxation})}.
\end{theorem}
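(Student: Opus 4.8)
The plan is to reduce Theorem~\ref{theorem2} to the standard convergence theory for the quadratic penalty method, which applies because problem~(\ref{relaxation}) is exactly the problem obtained from~(\ref{penalty}) by moving the equality constraints $\sum_{r\in[l_i]}x^{(i)}_r=1$ into the penalty term, with the inequality constraints $x\geqslant 0$ retained in both. First I would set up notation: write the feasible set of the retained constraints as $\Omega=\mathbb{R}^m_+$, let $c(x)\in\mathbb{R}^n$ denote the vector of equality-constraint residuals $c_i(x)=\sum_{r\in[l_i]}x^{(i)}_r-1$, so that the penalty subproblem~(\ref{penalty}) reads $\min_{x\in\Omega}\,Q_\sigma(x):=f(x)+\tfrac{\sigma}{2}\|c(x)\|^2$ and the relaxation~(\ref{relaxation}) reads $\min\{f(x):x\in\Omega,\ c(x)=0\}$. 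Since $\Omega$ is closed, $f$ and $c$ are continuous (indeed polynomial), and~(\ref{relaxation}) is feasible (any assignment point is feasible, so the feasible set is nonempty), the hypotheses of the classical theorem (e.g.\ \cite{jorge2006numerical}, Theorem~17.1, or \cite{sun2006optimization}, Corollary~10.2.6) are met.

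Next I would carry out the two-line argument at the heart of the penalty convergence proof, specialized to our setting. Let $\bar{x}$ be an accumulation point of $\{x^k\}$, say $x^{k_j}\to\bar{x}$. Because $x^{k+1}$ is by assumption a global minimizer of $Q_{\sigma_k}$ over $\Omega$ and $x^k$ itself lies in $\Omega$, for any fixed feasible point $z$ of~(\ref{relaxation}) (so $z\in\Omega$, $c(z)=0$) we have $f(x^{k+1})+\tfrac{\sigma_k}{2}\|c(x^{k+1})\|^2\leqslant f(z)+\tfrac{\sigma_k}{2}\|c(z)\|^2=f(z)$. This yields two consequences: first, $\tfrac{\sigma_k}{2}\|c(x^{k+1})\|^2\leqslant f(z)-f(x^{k+1})$, and second, $f(x^{k+1})\leqslant f(z)$. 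The first inequality, combined with $\sigma_k\to+\infty$ and the boundedness of $f(x^{k_j+1})$ near $\bar{x}$ along the convergent subsequence, forces $\|c(x^{k_j+1})\|\to 0$, and continuity of $c$ gives $c(\bar{x})=0$; together with $\bar{x}\in\Omega$ (closedness) this shows $\bar{x}$ is feasible for~(\ref{relaxation}). The second inequality, passed to the limit along the subsequence using continuity of $f$, gives $f(\bar{x})\leqslant f(z)$ for every feasible $z$, i.e.\ $\bar{x}$ is a global optimal solution of~(\ref{relaxation}).

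A small technical point I would handle carefully is the indexing: the subsequence $x^{k_j}$ converges, but the inequalities above involve $x^{k_j+1}$. The standard fix is to note that the penalty value $Q_{\sigma_k}(x^{k+1})$ and hence $f(x^{k+1})$ is bounded above by $f(z)$ uniformly in $k$, and $f$ is bounded below on the relevant region (or one argues along the shifted subsequence directly, which also has an accumulation point); one can simply relabel so that the argument is applied to whichever subsequence actually converges. Since the theorem is quoted verbatim from the cited references and the specialization is routine, I would in fact follow the paper's own stance and either omit the proof or include only the compressed two-inequality sketch above.

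The main obstacle, such as it is, is not mathematical depth but making sure the correspondence between~(\ref{penalty}), (\ref{relaxation}), and the abstract template is airtight --- in particular that the inequality constraints $x\geqslant 0$ are treated as the ``hard'' constraints kept inside the feasible set $\Omega$ (so the cited theorem applies with a closed constraint set) while only the semi-assignment equalities are penalized, and that feasibility of~(\ref{relaxation}) is explicitly noted so that the comparison point $z$ exists. Everything else is the boilerplate penalty-method limit argument.
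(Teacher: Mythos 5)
Your proposal is correct and is precisely the classical quadratic-penalty convergence argument that the paper itself invokes: the paper omits the proof and defers to Theorem~17.1 of \cite{jorge2006numerical} and Corollary~10.2.6 of \cite{sun2006optimization}, and your specialization (keeping $x\geqslant 0$ as the hard constraint set $\Omega$, penalizing only the semi-assignment equalities, and running the standard two-inequality limit argument along the convergent subsequence) is exactly what those references establish. The indexing point and the feasibility of (\ref{relaxation}) are handled correctly, so there is nothing to add.
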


Due to \textbf{Theorem} \ref{theorem2}, we always assume the following holds.

\begin{assumption}\label{assumption1}
Let $\left\{x^k\right\}$ be generated by \textbf{Algorithm} \ref{algorithm2}, and $\lim_{k\to\infty}\sigma_k=+\infty$. Denote $K$ as a subset of $\left\{1,2,...\right\}$. Assume that $\lim_{k\to\infty,k\in K}x^k=z$, and $z$ is a global optimal solution of the relaxation problem {\rm (\ref{relaxation})}.
\end{assumption}

The following theorems further analyze the convergence of \textbf{Algorithm} \ref{algorithm2}. We define
\begin{equation}
\begin{split}
&\Gamma^k=\left\{p:{(x^k)}_p>0\right\},\enspace\Gamma(z)=\left\{p:z_p>0\right\},\\
&\Omega^k=\left\{p:{(x^k)}_p=0\right\},\enspace\Omega(z)=\left\{p:z_p=0\right\},\\
&J_i^k=\arg\max\limits_r\left\{(x^k)^{(i)}_r\right\},\enspace J_i(z)=\arg\max\limits_r\left\{z^{(i)}_r\right\},\\
&J^k=\left\{J_i^k,i\in [n]\right\},\enspace J(z)=\left\{J_i(z),i\in [n]\right\}.\notag
\end{split}
\end{equation}

\begin{theorem}\label{theorem3}
Suppose that \textbf{Assumption} \ref{assumption1} holds.
\begin{itemize}
\item [(i)]
If
$\Vert z\Vert_0=n$, then there exists an integer $k_0>0$, such that $J^k=\Gamma(z)$, $\forall k\geqslant k_0$, $k\in K$; 
\item [(ii)]
If 
\begin{math}
\Vert z\Vert_0>n$, and $\vert J_i(z)\vert=1$, $\forall i\in [n]$, then there exist an integer $
k_0>0$ and an optimal solution $x^*$ to the original problem {\rm (\ref{assignment})},  
such that $J^k=\Gamma^*$, $\forall k\geqslant k_0$, $k\in K;
\end{math}
\item [(iii)]
If
\begin{math}\Vert z\Vert_0>n$, and $\vert J_i(z)\vert>1$ for at least one $i\in [n]$, then there exist a subsequence $\left\{x^k\right\}$, $k\in K'\subseteq K$, an integer $k_0>0$, and an optimal solution $x^*$ to the original problem {\rm (\ref{assignment})}, such that $J^k=\Gamma^*$, $\forall k\geqslant k_0$, $k\in K'
\end{math}. 
\end{itemize}
\end{theorem}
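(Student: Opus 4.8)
\section*{Proof proposal for Theorem \ref{theorem3}}

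The plan is to analyze the three cases separately, but all of them rest on a single mechanism: as $\sigma_k\to\infty$ along $k\in K$, the iterates $x^k$ converge to the global minimizer $z$ of the relaxation problem \eqref{relaxation}, so the index set $J^k$ of block-wise argmax entries must eventually ``track'' the support of a nearby vertex of the feasible polytope. First I would record the elementary continuity fact that, since $x^k\to z$ componentwise on $K$, for any coordinate $p$ with $z_p>0$ we have $(x^k)_p>0$ for $k$ large, and for any $i$ the block $(x^k)^{(i)}$ is close to $z^{(i)}$; consequently $J_i^k\subseteq J_i(z)$ for $k$ large whenever $J_i(z)$ is a strict-max singleton, because a unique largest entry stays largest under small perturbations. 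This is the engine behind all three parts.

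For part (i), if $\Vert z\Vert_0=n$ then each block $z^{(i)}$ has exactly one nonzero entry; by Theorem \ref{theorem1} $z$ is in fact $0/1$, so $z^{(i)}$ is a vertex $e_{J_i(z)}$ of $\Delta_i$ and $J_i(z)$ is a strict argmax. Then $\Gamma(z)=J(z)$, and the continuity observation gives $J_i^k=J_i(z)$ for all $i$ once $k\ge k_0$, hence $J^k=J(z)=\Gamma(z)$. For part (ii), $\Vert z\Vert_0>n$ but every block still has a \emph{unique} maximal entry $J_i(z)$ (though other entries in the block may be nonzero). Define $x^*$ by $(x^*)^{(i)}=e_{J_i(z)}$; this is feasible for \eqref{assignment}. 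I must show $x^*$ is optimal. Here I would invoke the block-linearity from Proposition \ref{proposition1}: $f(z)$ equals the optimal value of \eqref{relaxation}, and I can sequentially round the blocks of $z$ toward $x^*$. The key point is that, using \eqref{function}, rounding block $j$ from $z^{(j)}$ to $e_{J_j(z)}$ cannot increase $f$; more precisely, following the argument in the proof of Theorem \ref{theorem1}, one shows that moving to any vertex of $\Delta_j$ that is still optimal for the linear subproblem \eqref{simplex} preserves the value, and by a standard support/complementarity argument the unique-argmax vertex is among the optimal ones — this is the step I expect to need the most care, since a priori the argmax need not coincide with an optimal vertex of the linear subproblem, so one may need to perturb the definition of $x^*$ to a suitable optimal vertex rather than the literal argmax. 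Granting that $x^*$ is optimal and $0/1$, the strict-argmax continuity argument again yields $J_i^k=J_i(z)=\Gamma_i^*$ eventually, so $J^k=\Gamma^*$ for $k\ge k_0$, $k\in K$.

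For part (iii), some block $z^{(i_0)}$ has a tie for the maximum, so $J_{i_0}^k$ need not converge. The remedy is to pass to a subsequence: the set of ``winners'' $J_i^k\in J_i(z)$ takes finitely many values, so along a further subsequence $K'\subseteq K$ each $J_i^k$ is constant, say equal to some fixed selection $\tilde J_i\in J_i(z)$; set $x^*$ with $(x^*)^{(i)}=e_{\tilde J_i}$. Optimality of this $x^*$ follows exactly as in part (ii) — any single-support selection consistent with an optimal vertex of each linear subproblem gives a $0/1$ point with value $f(z)$ — and then $J^k=\Gamma^*$ for all $k\ge k_0$, $k\in K'$ by construction. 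The main obstacle throughout is the same: reconciling the \emph{algorithmic} object $J^k$ (block argmax of the penalty iterates) with the \emph{combinatorial} object $\Gamma^*$ (support of an optimal $0/1$ point), which forces one to be careful that the block that ``wins'' numerically indeed corresponds to a vertex that is optimal for the linearized subproblem \eqref{simplex}, and to fall back on subsequence extraction precisely when ties make this correspondence ambiguous.
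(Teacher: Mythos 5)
Your proposal is correct and follows essentially the same route as the paper's proof: part (i) via convergence of the block argmax to the unique support of $z$, part (ii) via sequential block rounding to the argmax vertices using the construction from \textbf{Theorem}~\ref{theorem1}, and part (iii) via subsequence extraction to stabilize the tied argmax selection before applying the part (ii) argument. The complementarity point you flag as delicate --- that $e_{J_i(z)}$ is an optimal vertex of the linearized subproblem (\ref{simplex}) precisely because $J_i(z)$ lies in the support of the optimal block $z^{(i)}$, and that this persists after each rounding step since the rounded point remains a global minimizer of (\ref{relaxation}) --- is exactly the step the paper glosses over, and your resolution of it is sound, so no perturbation of $x^*$ away from the literal argmax is needed.
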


\begin{proof}
see the appendix.
\end{proof}

\textbf{Theorem} \ref{theorem3} ensures that there always exists a subsequence of $\left\{x^k\right\}$ generated by \textbf{Algorithm} \ref{algorithm2} whose support set will coincide with the support set of one global minimizer of (\ref{assignment}). 

\begin{theorem}\label{theorem4}
Suppose that \textbf{Assumption} \ref{assumption1} holds. If there exists a positive integer 
\begin{math}
k_0,
\end{math}
such that $\Vert x^k\Vert_0=n$, $\forall k\geqslant k_0$, $k\in K$, then there is a positive integer $k_1\geqslant k_0$ such that $\Gamma^k=\Gamma(z)$, $\forall k\geqslant k_1$, 
$k\in K$, where z is an optimal solution of {\rm (\ref{assignment})}. 
\end{theorem}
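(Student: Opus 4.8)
\textbf{Proof proposal for Theorem \ref{theorem4}.}

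The plan is to combine the conclusion of \textbf{Theorem} \ref{theorem3}(i) with the hypothesis $\|x^k\|_0 = n$ to pin down not just the maximal entry in each block but the \emph{entire} support. First, note that the hypothesis $\|x^k\|_0 = n$ for all $k \ge k_0$, $k \in K$, together with the near-feasibility of the penalty iterates (each block sum $\sum_{r} (x^k)^{(i)}_r$ tends to $1$ as $k \to \infty$, $k \in K$, since $z$ is feasible for (\ref{relaxation}) and $x^k \to z$), forces each block $(x^k)^{(i)}$ to have exactly one nonzero entry for $k$ large: indeed $m$ blocks must collectively carry exactly $n$ nonzeros, each block carries at least one (else that block sum would be $0$, contradicting convergence to $1$), so each block carries exactly one. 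Hence for $k$ large, $\Gamma^k = J^k$, i.e.\ the support coincides with the set of argmax indices. Since $\|z\|_0 = n$ as well (this follows because $z$ is a limit of points with exactly $n$ nonzeros whose block sums converge to $1$; a block of $z$ cannot be identically zero, so $z$ has at least $n$ nonzeros, and lower semicontinuity of $\|\cdot\|_0$ on the nonnegative orthant gives $\|z\|_0 \le \liminf \|x^k\|_0 = n$), we likewise get $\Gamma(z) = J(z)$.

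Next I would invoke \textbf{Theorem} \ref{theorem3}(i), which under \textbf{Assumption} \ref{assumption1} and $\|z\|_0 = n$ yields an integer $k_0' > 0$ with $J^k = \Gamma(z)$ for all $k \ge k_0'$, $k \in K$. Taking $k_1 = \max\{k_0, k_0'\}$ and combining with the previous paragraph, for all $k \ge k_1$, $k \in K$, we have $\Gamma^k = J^k = \Gamma(z)$, which is the desired conclusion. Finally, that $z$ is an optimal solution of (\ref{assignment}): since $\|z\|_0 = n$ and $z$ is feasible for (\ref{relaxation}) with each block summing to $1$ and having a single nonzero, that nonzero must equal $1$, so $z$ is feasible for (\ref{assignment}); and by \textbf{Theorem} \ref{theorem1} the optimal value of (\ref{relaxation}) equals that of (\ref{assignment}), so the globally optimal (for (\ref{relaxation})) point $z$, being feasible for (\ref{assignment}), is optimal for (\ref{assignment}) too.

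The main obstacle I anticipate is making rigorous the passage from ``block sums converge to $1$'' to ``each block is eventually nonzero,'' and more delicately, ensuring the \emph{single} nonzero entry in block $i$ of $x^k$ is the same index as the single nonzero of $z$ — this is exactly where \textbf{Theorem} \ref{theorem3}(i) does the work, since a priori the lone surviving index could oscillate among the elements of $J_i(z)$; but $\|z\|_0 = n$ forces $|J_i(z)| = 1$ for every $i$, removing the ambiguity. A secondary technical point is justifying $\sum_r (x^k)^{(i)}_r \to 1$; this should follow from $x^k \to z$ along $K$ and the feasibility of $z$, but one should check it does not secretly require the penalty parameter going to infinity to be used more carefully (it does not, since convergence of the iterates already delivers it). None of these steps involves heavy computation; the argument is essentially a bookkeeping refinement of \textbf{Theorem} \ref{theorem3}(i) under the extra sparsity hypothesis.
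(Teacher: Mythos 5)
Your proposal is correct, but it is organized differently from the paper's argument, which is a short, self-contained counting proof: since $z$ is feasible for (\ref{relaxation}), every block of $z$ has at least one positive entry, so $|\Gamma(z)|\geqslant n$; since $x^k\to z$ along $K$, every index in $\Gamma(z)$ is eventually in $\Gamma^k$, i.e.\ $\Gamma(z)\subseteq\Gamma^k$; and since $|\Gamma^k|=\Vert x^k\Vert_0=n$ by hypothesis, the inclusion forces $|\Gamma(z)|=n$ and $\Gamma(z)=\Gamma^k$, after which feasibility plus $\Vert z\Vert_0=n$ makes $z$ a $0/1$ point and hence optimal for (\ref{assignment}). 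You in fact establish both of these ingredients yourself — your lower-semicontinuity observation $\Vert z\Vert_0\leqslant\liminf\Vert x^k\Vert_0$ is exactly the inclusion $\Gamma(z)\subseteq\Gamma^k$ restated in cardinality form, and your "each block carries exactly one nonzero" step is the same pigeonhole count — so the subsequent detour through the argmax sets $J^k$ and the invocation of \textbf{Theorem} \ref{theorem3}(i) is logically sound but redundant: once $\Gamma(z)\subseteq\Gamma^k$ and $|\Gamma(z)|=|\Gamma^k|=n$ are in hand, equality of the supports follows immediately without identifying them with argmax sets. What your route buys is a clean reduction of Theorem \ref{theorem4} to Theorem \ref{theorem3}(i) (showing the extra hypothesis $\Vert x^k\Vert_0=n$ forces the case $\Vert z\Vert_0=n$), at the cost of inheriting whatever care is needed in the proof of Theorem \ref{theorem3}(i); the paper's version avoids that dependence entirely. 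Two small points to tidy: "$m$ blocks" in your first paragraph should read "$n$ blocks," and the identity $\Gamma^k=J^k$ needs the (true, but worth stating) remark that the penalty iterates are nonnegative so the unique nonzero entry of a block is indeed its maximizer.
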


\begin{proof}
see the appendix.
\end{proof}

\textbf{Theorem} \ref{theorem4} gives a special case when \textbf{Algorithm} \ref{algorithm2} converges, which indicates that we do not need to drive $\sigma_k$ to infinity since only the support set of $z$ is needed. In practice, if the conditions in \textbf{Theorem} \ref{theorem4} holds, we can stop the algorithm when the elements in $J^k$ keep unchanged for several iterations. Consequently, the above theorems provide a method to design the termination rule for \textbf{Algorithm} 2.

\section{Numerical Results}\label{sec5}

The proposed \textbf{Algorithm} \ref{algorithm2} is termed as AQPPG, which is the abbreviation of Assignment Quadratic Penalty Projected Gradient method.
We implement the algorithm in MATLAB (R2018a). All the experiments are performed on a Lenovo desktop with AMD Ryzen7 4800H CPU at 2.90 GHz and 16 GB of memory running Windows 10. We use the data as in \cite{allouche2014computational}, which can be downloaded from \url{https://genoweb.toulouse.inra.fr/~tschiex/CPD-AIJ/}.
\footnote{To convert the floating point energies of a given instance to non-negative integer costs, David Allouche et al. \cite{allouche2014computational} subtracted the minimum energy to all energies and then multiplied energies by an integer constant $M$ and rounded to the nearest integer. Therefore, all the energies in the data sets are non-negative integers.}
\textbf{Table} \ref{parameter} shows the information of all data sets tested. In \textbf{Table} \ref{parameter}, $Position$ $(n)$ represents the number of positions in the target protein, which is also the number of blocks in the decision variable $x$. 
$Component$ $(m)$ shows the dimension of the decision variable $x$.

\begin{longtable}[H]{|c|c|c|c|c|c|}
\caption{Information of all the data sets. $\min l_i$ represents how many components one block in $x$ contains at least, and $\max l_i$ represents how many components one block contains at most.}\label{parameter}
\endfirsthead
\multicolumn{5}{c}{{Table \ref{parameter} - continued from previous page}}\\ \hline
NO. & Data & {\makecell[c]{$n$ \\ Position}} & {\makecell[c]{$l_i\in (\min l_i, \max l_i)$ \\ Rotamer}} & {\makecell[c]{$m$ \\ Component}}\\\hline
\endhead
\hline\multicolumn{3}{l}{{Continued on next page}}\\
\endfoot
\hline
\endlastfoot
\hline
NO. & Data & {\makecell[c]{$n$ \\ Position}} & {\makecell[c]{$l_i\in (\min l_i, \max l_i)$ \\ Rotamer}} & {\makecell[c]{$m$ \\ Component}}\\\hline
1 & 1HZ5 & 12 & (49, 49) & 588 \\ 
2 & 1PGB & 11 & (49, 49) & 539 \\ 
3 & 2PCY & 18 & (48, 48) & 864 \\ 
4 & 1CSK & 30 & (3, 49) & 616 \\ 
5 & 1CTF & 39 & (3, 56) & 1204 \\ 
6 & 1FNA & 38 & (3, 48) & 990 \\ 
7 & 1PGB & 11 & (198, 198) & 2178 \\ 
8 & 1UBI & 13 & (49, 49) & 637 \\ 
9 & 2TRX & 11 & (48, 48) & 528 \\ 
10 & 1UBI & 13 & (198, 198) & 2574 \\ 
11 & 2DHC & 14 & (198, 198) & 2772 \\ 
12 & 1PIN & 28 & (198, 198) & 5544 \\ 
13 & 1C9O & 55 & (198, 198) & 10890 \\ 
14 & 1C9O & 43 & (3, 182) & 1950 \\ 
15 & 1CSE & 97 & (3, 183) & 1355 \\ 
16 & 1CSP & 30 & (3, 182) & 1114 \\ 
17 & 1DKT & 46 & (3, 190) & 2243 \\ 
18 & 1BK2 & 24 & (3, 182) & 1294 \\ 
19 & 1BRS & 44 & (3, 194) & 3741 \\ 
20 & 1CM1 & 17 & (198, 198) & 3366 \\ 
21 & 1SHG & 28 & (3, 182) & 737 \\ 
22 & 1MJC & 28 & (3, 182) & 493 \\ 
23 & 1SHF & 30 & (3, 56) & 638 \\ 
24 & 1FYN & 23 & (3, 186) & 2474 \\ 
25 & 1NXB & 34 & (3, 56) & 800 \\ 
26 & 1TEN & 39 & (3, 66) & 808 \\ 
27 & 1POH & 46 & (3, 182) & 943 \\ 
28 & 1CDL & 40 & (3, 186) & 4141 \\ 
29 & 1HZ5 & 12 & (198, 198) & 2376 \\ 
30 & 2DRI & 37 & (3, 186) & 2120 \\ 
31 & 2PCY & 46 & (3, 56) & 1057 \\ 
32 & 2TRX & 61 & (3, 186) & 1589 \\ 
33 & 1CM1 & 42 & (3, 186) & 3633 \\ 
34 & 1LZ1 & 59 & (3, 57) & 1467 \\ 
35 & 1GVP & 52 & (3, 182) & 3826 \\ 
36 & 1R1S & 56 & (3, 182) & 3276 \\ 
37 & 2RN2 & 69 & (3, 66) & 1667 \\ 
38 & 1HNG & 85 & (3, 182) & 2341 \\ 
39 & 3CHY & 74 & (3, 66) & 2010 \\ 
40 & 1L63 & 83 & (3, 182) & 2392 \\ 
\end{longtable}

\subsection{An example as illustration}

We first demonstrate the performance of AQPPG on the data set 1CSK. The target protein in 1CSK contains 30 positions, which means that the decision variables have 30 blocks, i.e., $n=30$. Each position contains at least 3, and at most 49 kinds of rotamers to be selected, which means that each block in the decision variable has 3 to 49 components, i.e., $3\leqslant l_i\leqslant 49$, $i\in [n]$. The numbers of rotamers that can be selected for each position are shown in \textbf{Figure} \ref{l_i}. The decision variable is a 616-dimensional vector.

\begin{figure}[H]
\centering
\includegraphics[width=.85\linewidth]{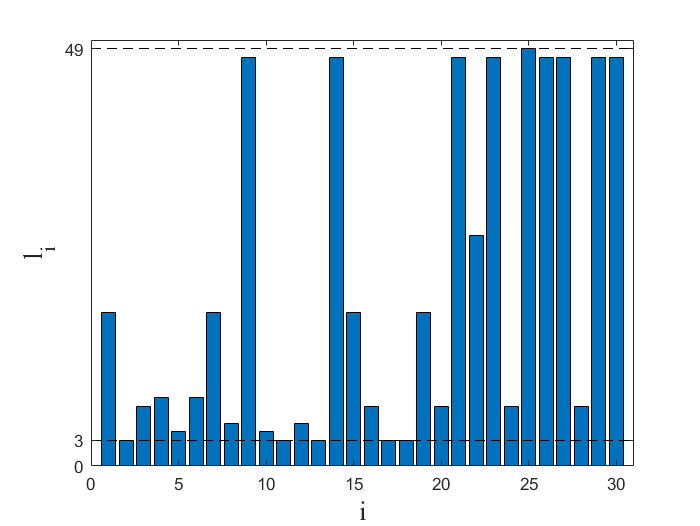}
\caption{The number of rotamers $l_i$ in each position $i$ for 1CSK}\label{l_i}
\end{figure}

\begin{table}[!ht]
\centering
\caption{Selected rotamers for each position $i$ in $\hat{x}$ for 1SHF}\label{selected}
\begin{tabular}{|c|c|c|c|c|c|c|c|c|c|c|} \hline
Position $i$ & 1 & 2 & 3 & 4 & 5 & 6 & 7 & 8 & 9 & 10 \\ \hline
Selected rotamer & 7 & 3 & 6 & 5 & 2 & 5 & 16 & 5 & 18 & 1 \\ \hline
Position $i$ & 11 & 12 & 13 & 14 & 15 & 16 & 17 & 18 & 19 & 20\\ \hline
Selected rotamer & 3 & 5 & 2 & 23 & 11 & 6 & 3 & 2 & 1 & 7 \\ \hline
Position $i$ & 21 & 22 & 23 & 24 & 25 & 26 & 27 & 28 & 29 & 30 \\ \hline
Selected rotamer & 30 & 12 & 12 & 2 & 2 & 18 & 2 & 6 & 12 & 3 \\ \hline
\end{tabular}
\end{table}

\noindent The AQPPG reaches the termination condition after 1 second (459 iterations). According to the optimal strategy given by the algorithm, the total protein energy, i.e., the optimization goal, reaches the minimum value 1125838 when specific rotamers are selected for the corresponding positions, as shown in \textbf{Table} \ref{selected}. The following \textbf{Figure} \ref{spar}, \textbf{Figure} \ref{sparnum} and \textbf{Figure} \ref{value} show more details during the iteration process. 

\begin{figure}[htbp]
\centering
\begin{minipage}{0.49\linewidth}
\centering
\includegraphics[scale=0.28]{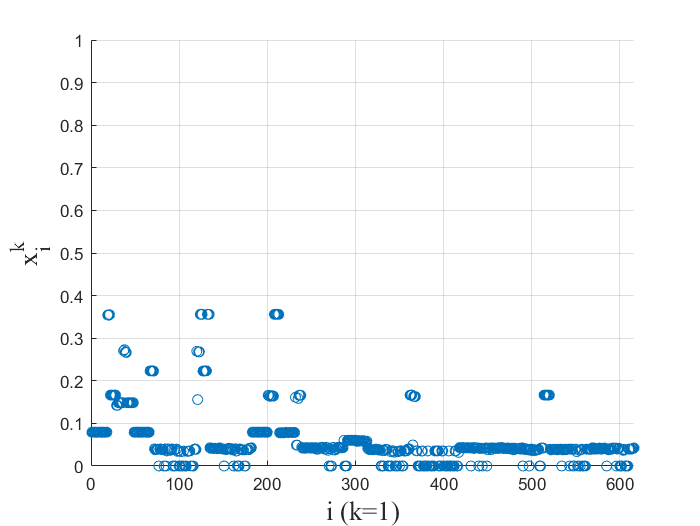}
\end{minipage}
\begin{minipage}{0.49\linewidth}
\centering
\includegraphics[scale=0.28]{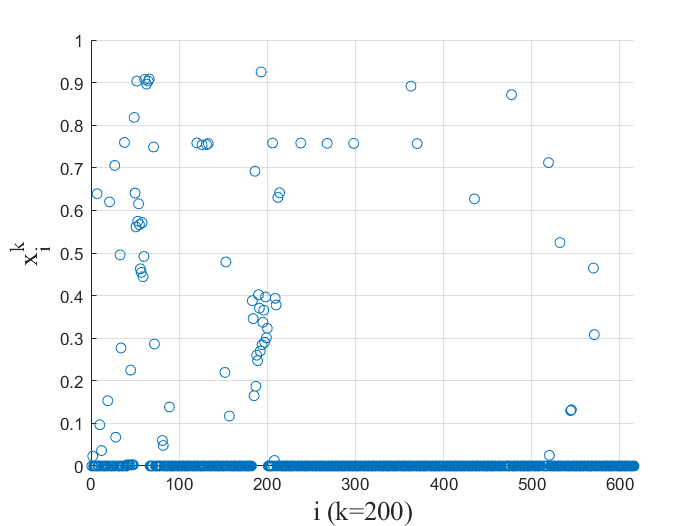}
\end{minipage}

\begin{minipage}[b]{0.49\linewidth}
\centering
\includegraphics[scale=0.28]{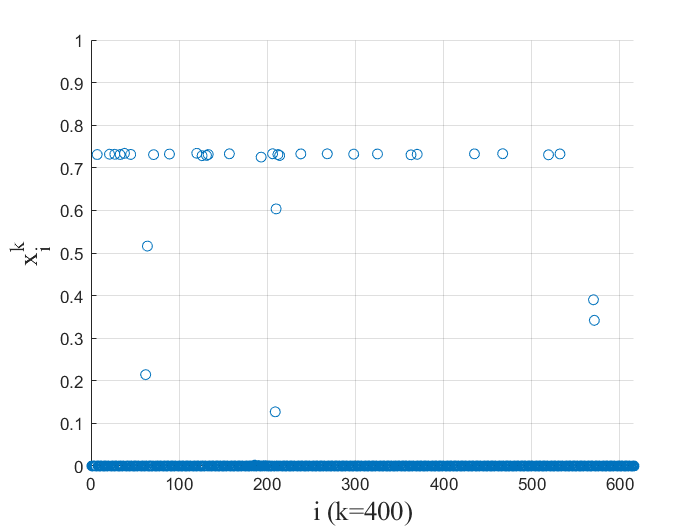}
\end{minipage}
\begin{minipage}[b]{0.49\linewidth}
\centering
\includegraphics[scale=0.28]{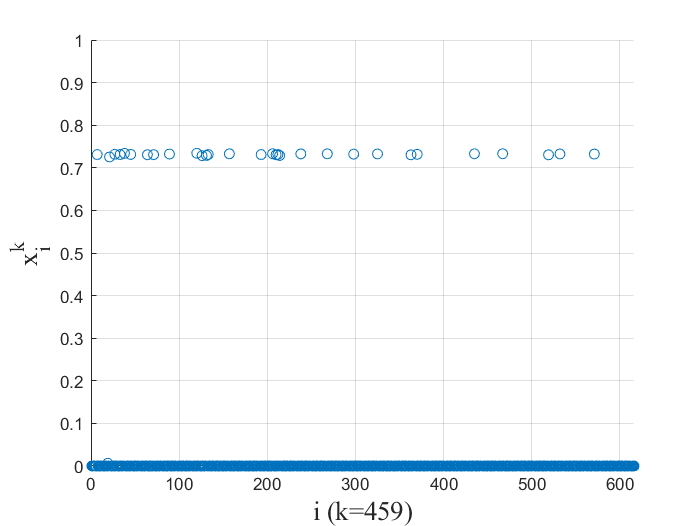}
\end{minipage}
\caption{Nonzeros of $x^k$ when $k=1$, $200$, $400$ and $459$ for 1CSK}\label{spar}
\end{figure}


\begin{figure}[H]
\centering
\includegraphics[width=.75\linewidth]{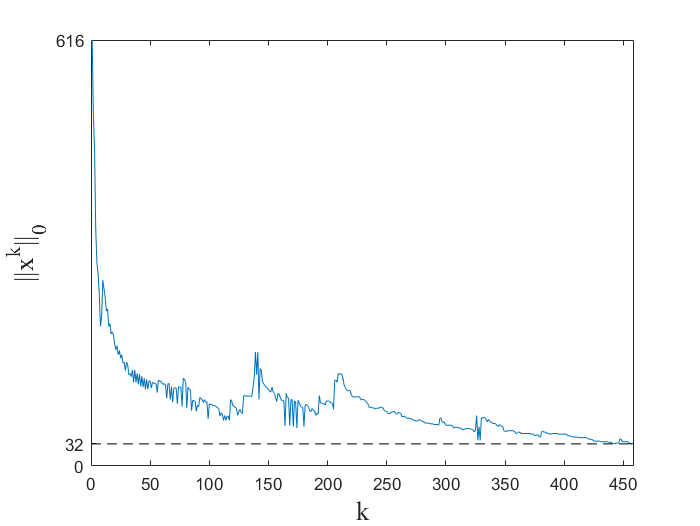}
\caption{The number of nonzero components in $x^k$ for 1CSK}\label{sparnum}
\end{figure}

\begin{figure}[h]
\centering
\includegraphics[width=.75\linewidth]{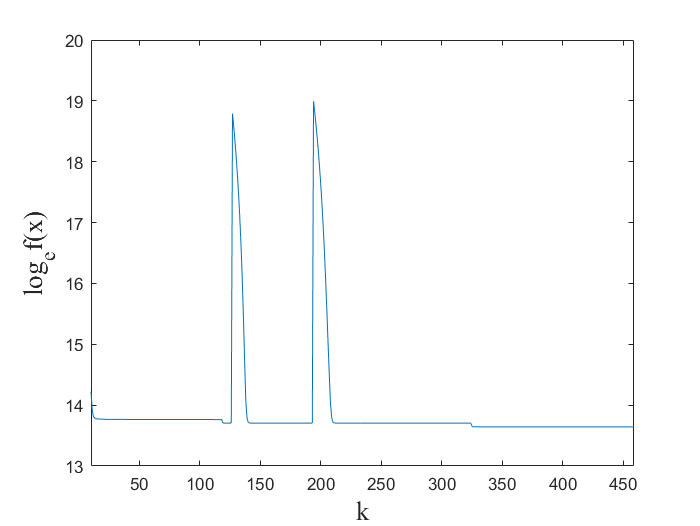}
\caption{Function value during the iterations for 1CSK}\label{value}
\end{figure}

\noindent As shown in \textbf{Figure} \ref{spar} and \textbf{Figure} \ref{sparnum}, the number of nonzero components in the decision variable $x^k$ decreased gradually during the iteration process, and the number of nonzero entries finally turned into $32$, which means that the decision variable $x^k$ was close to the feasible region of the problem (\ref{assignment}). 
\textbf{Figure} \ref{value} shows the function value during the iterations. In the first 10 iterations, the function value dropped dramatically from the initial value, which was over $5\times 10^8$. When $k$ was about 130 and 200, there were several fluctuations to the function value, meaning that the alogorithm was searching for better staionary points. After that, the function value decreased rapidly and gradually stabilized at 629244, which was the optimal value of the relaxation problem (\ref{relaxation}). However, note that the decision variable $x^k$ is not a feasible point for the original problem (\ref{assignment}), we need to transform $x^k$ into $\hat{x}$ using \textbf{Algorithm} \ref{algorithm1}. Thus we get the solution $\hat{x}$ as shown in \textbf{Table} \ref{selected}, and the corresponding optimal value for the problem (\ref{assignment}) is 1125838.

\subsection{Comparison with the state-of-the-art branch-and-cut solver}

We compare AQPPG with Gurobi (version 9.5.2), one of the state-of-the-art branch-and-cut solver. 
\textbf{Table} \ref{results} shows the results given by AQPPG and Gurobi. 
$Objective$ represents the function values given by different methods. Results marked with * means that the corresponding solver does not terminate within 10 hours, and the objective is the best value the solver could give within 10 hours. '$-$' means that the solver fail to solve the problem for the lack of memory. 
$Ratio$ represents the ratio of the optimal values given by Gurobi compared to those given by AQPPG. $Time$ shows the CPU time to get the optimal values in the form of $hh:mm:ss$.

\begin{longtable}[H]{|c|c|rr|r|rr|}
\caption{Results given by AQPPG and Gurobi}\label{results}
\endfirsthead
\multicolumn{6}{c}{{Table \ref{results} - continued from previous page}}\\ \hline
\multirow {2} {0.75cm} {NO.} & \multirow {2} {0.75cm} {Data} & \multicolumn {2} {c|} {Objective} & \multicolumn {1} {c|} {Ratio} & \multicolumn {2} {c|} {Time} \\
&& \multicolumn {1} {r} {AQPPG} & \multicolumn {1} {r|} {Gurobi} & \multicolumn {1} {c|} {Gurobi} & \multicolumn {1} {r} {AQPPG} & \multicolumn {1} {r|} {Gurobi} \\ \hline
\endhead
\hline\multicolumn{3}{l}{{Continued on next page}}\\
\endfoot
\hline
\endlastfoot
\hline
\multirow {2} {0.75cm} {NO.} & \multirow {2} {0.75cm} {Data} & \multicolumn {2} {c|} {Objective} & \multicolumn {1} {c|} {Ratio} & \multicolumn {2} {c|} {Time} \\
&& \multicolumn {1} {r} {AQPPG} & \multicolumn {1} {r|} {Gurobi} & \multicolumn {1} {c|} {Gurobi} & \multicolumn {1} {r} {AQPPG} & \multicolumn {1} {r|} {Gurobi} \\ \hline
1 & 1HZ5 & 150714 & 150714 & 100.00\% & 1 & 4:32:07 \\ 
2 & 1PGB & 125306 & 125306 & 100.00\% & 1 & 5:13:59 \\  
3 & 2PCY & 308545 & 307667 & 99.72\% & 2 & 9:58:45 \\  
4 & 1CSK & 1125838 & *1125798 & 100.00\% & 1 & 10:00:00 \\ 
5 & 1CTF & 1882883 & *1881874 & 99.95\% & 54 & 10:00:00 \\ 
6 & 1FNA & 3751671 & *3750260 & 99.96\% & 5 & 10:00:00 \\ 
7 & 1PGB & 287413 & *286468 & 99.67\% & 3:58 & 10:00:00 \\  
8 & 1UBI & 159700 & 159522 & 99.89\% & 2 & 5:32:53 \\ 
9 & 2TRX & 178900 & 178534 & 99.80\% & 1 & 4:34:26 \\  
10 & 1UBI & 382033 & *381180 & 99.78\% & 47 & 10:00:00 \\ 
11 & 2DHC & 1424025 & *1422718 & 99.91\% & 11 & 10:00:00 \\ 
12 & 1PIN & 1996834 & *1995099 & 99.91\% & 2:02 & 10:00:00 \\ 
13 & \underline{1C9O} & 8084802 & - & - & 3:40 & - \\ 
14 & 1C9O & 4975017 & *4959931 & 99.70\% & 1:19 & 10:00:00 \\ 
15 & 1CSE & 18602843 & *18602292 & 100.00\% & 41 & 10:00:00 \\ 
16 & 1CSP & 2521159 & *2520706 & 99.98\% & 28 & 10:00:00 \\ 
17 & 1DKT & 4214282 & *4192707 & 99.49\% & 8:13 & 10:00:00 \\ 
18 & 1BK2 & 1140948 & *1133737 & 99.37\% & 1:09 & 10:00:00 \\ 
19 & 1BRS & 4017422 & *4007755 & 99.76\% & 3:39 & 10:00:00 \\ 
20 & 1CM1 & 746221 & *743645 & 99.66\% & 56 & 10:00:00  \\ 
21 & 1SHG & 1513349 & 1513151 & 99.99\% & 3 & 5:27:03  \\ 
22 & \underline{1MJC} & 1514481 & - & - & 3 & -  \\ 
23 & 1SHF & 1101835 & 1101033 & 99.93\% & 1 & 7:18:47  \\ 
24 & 1FYN & 1194046 & *1183722 & 99.14\% & 1:08 & 10:00:00  \\ 
25 & 1NXB & 2979543 & *2971624 & 99.73\% & 3 & 10:00:00  \\ 
26 & 1TEN & 1960322 & *1959862 & 99.98\% & 5 & 10:00:00  \\ 
27 & 1POH & 4034259 & 4033915 & 99.99\% & 2:48 & 8:04:23  \\ 
28 & 1CDL & 3594181 & 3590578 & 99.90\% & 6:47 & 2:45:33  \\ 
29 & 1HZ5 & 343021 & *343113 & 100.03\% & 17 & 10:00:00  \\ 
30 & 2DRI & 2908142 & *2905276 & 99.90\% & 1:07:08 & 10:00:00  \\ 
31 & 2PCY & 2937638 & *2935820 & 99.94\% & 5 & 10:00:00  \\ 
32 & 2TRX & 7020438 & *7016199 & 99.93\% & 1:31 & 10:00:00  \\ 
33 & 1CM1 & 3904719 & *3895736 & 99.77\% & 2:40 & 10:00:00  \\ 
34 & 1LZ1 & 7038826 & *7022768 & 99.77\% & 1:37 & 10:00:00  \\ 
35 & 1GVP & 5205320 & *5196913 & 99.84\% & 1:33 & 10:00:00  \\ 
36 & 1R1S & 6174155 & *6171802 & 99.96\% & 10:34 & 10:00:00  \\ 
37 & 2RN2 & 8918311 & *8910166 & 99.91\% & 18 & 10:00:00  \\ 
38 & 1HNG & 13543984 & *13532638 & 99.91\% & 3:07 & 10:00:00  \\ 
39 & 3CHY & 10466158 & *10461537 & 99.96\% & 24 & 10:00:00  \\ 
40 & 1L63 & 13015089 & *12891316 & 99.05\% & 44 & 10:00:00  \\ 
\end{longtable}

\noindent We can see that our algorithm AQPPG could effectively solve CPD problems. Gaps between the solutions given by Gurobi and AQPPG range from -0.95\% to +0.03\%. However, compared with Gurobi, the proposed AQPPG is much more efficient. 
In most cases, AQPPG outperforms Gurobi by three order of magnitude in CPU time, and the CPU time for Gurobi to reach the optimal solution exceeds 10 hours in nearly all the cases. Specifically, Gurobi even fails to find feasible points in some certain cases such as 1C9O and 1MJC, while AQPPG could still terminate in a short period of time (3 minutes 40 seconds and 3 seconds).

Based on the above results, we can conclude that our proposed AQPPG can effectively find a high-quality solution within a reasonable amount of time.

\section{Conclusion}\label{sec6}

In this paper, we proposed an efficient algorithm called AQPPG for solving the CPD problem. Using the fact that the objective of CPD problem relies linearly on each block of the decision variable, we proved that any optimal solution to the relaxation problem (\ref{relaxation}) can be transformed into an optimal solution to the original problem (\ref{assignment}). Then we proposed AQPPG, a quadratic penalty method applied to solve the proposed relaxation problem. Our numerical results show that our proposed algorithm can effectively find a high-quality solution for the CPD problem, and is much more efficient than the state-of-the-art solver Gurobi.
\\

\section*{Data availability statement}
The datasets used in this paper can be downloaded from \url{https://genoweb.toulouse.inra.fr/~tschiex/CPD-AIJ/}.

\section*{Disclosure statement}
The authors have no conflict of interest to declare that are relevant to the content of this article.

\section*{Funding}
The work of Qing-Na Li is supported by the National Natural Science Foundation of China (NSFC) 12071032.

\begin{appendices}\label{appendix}

\section{Proofs of Theorem \ref{theorem3} and Theorem \ref{theorem4}}\label{appendixa}

\begin{proof}[Proof of Theorem~{\upshape\ref{theorem3}}]

(i) Together with \textbf{Theorem} \ref{theorem1} and 
\begin{math}
\Vert z\Vert_0=n,
\end{math}
$z$ must be a global minimizer of (\ref{assignment}). By the definition of $\Gamma(z)$ and $\Omega(z)$, there exists an integer $k_0>0$, such that for $k\geqslant k_0,\enspace k\in K$, there is 
$z_p>z_{p'},\enspace\forall p\in \Omega(z),\enspace p'\in \Gamma(z)$. This gives (i).

\noindent (ii)
\begin{math}
\Vert J_i(z)\Vert=1
\end{math}
implies that for $k$ sufficiently large, there is
\begin{equation}
(x_k)^{(i)}_{J_i(z)}>(x_k)^{(i)}_p,\enspace
\forall p\notin J_i(z),\enspace i=1,...,n.\notag
\end{equation}
Consequently, there is 
\begin{math}
J_i^k=J_i(z).
\end{math}
Now let $x_0=z$. Similar to the arguments in the proof of \textbf{Theorem} \ref{theorem1}, we construct $x_1$ by choosing
$p_0=J_i(z)$. Then we can obtain a finite sequence $x_0, x_1,...,x_r$ with
\begin{equation}
\Vert x_r\Vert_0<\cdot\cdot\cdot<\Vert x_1\Vert_0<\Vert x_0\Vert_0\notag
\end{equation}
After at most m steps, the process will stop. In other words, $1\leqslant r\leqslant m$. At the final point, $x_r$ 
will satisfy that $\Vert x_r\Vert_0=n$. One can find a global minimizer $x^*=x_r$ of (\ref{relaxation}) with sparsity $n$. Further, $x^*$ is a global minimizer of (\ref{assignment}) which satisfies
\begin{equation}
\Vert J_i^*\Vert_0=1,\enspace J_i^*=\Gamma_i^*=J_i(z)=J_i^k.\notag
\end{equation}
Consequently, (ii) holds.

\noindent (iii)
Suppose that there exists an index $q_1$ such that $\Vert J_{q_1}(z)\Vert>1$. Consequently, there exists $p_1\in J_{q_1}^k$, such that for $k\in K$ sufficiently large, there are infinite number of $k$ satisfying $J_{q_1}^k=p_1$. Denote the corresponding subsequence as $\left\{x_k\right\}_{k\in K_1}$, where $K_1\subset K$. Similarly, for $\Vert J_{q_2}(z)\Vert_0>1$, we can find an infinite number of $k\in K_2\subseteq K_1$ such that $J_{q_2}^k=p_2$. Repeating the process until for all blocks, there exists an integer $k_0$, such that $\Vert J_i^k\Vert_0=1,\enspace i=1,...,n$, for all $k\in K_t\subseteq K_{t-1}...\subseteq K_1,\enspace k\geqslant k_0$. Let $K':=K_t$. For all $i=1,...,n$, we define $x^*$ as follows:
\begin{equation}
(x^*)^{(i)}_{p_i}=
\begin{cases}
1,\enspace $if $p_i=J_i^k,\enspace k\in K',\enspace k\geqslant k_0,\\
0,\enspace $otherwise$.\notag
\end{cases}
\end{equation}
Then we find a global minimizer $x^*$ of (\ref{relaxation}) such that $\Vert x^*\Vert_0=n$. For $k\geqslant k_0,\enspace k\in K'$, there is $J_i^k=J_i^*,\enspace i=1,...,n$. Consequently, $x^*$ is also a global minimizer of (\ref{assignment}). Hence, (iii) holds. This completes the proof.
\end{proof}

\begin{proof}[Proof of Theorem~{\upshape\ref{theorem4}}]
Noting that $z$ is a global minimizer of (\ref{relaxation}), there is $\Gamma(z)\geqslant n$. Since 
$\lim_{k\to\infty}x_k=z$, there exists an integer $k'>0$ such that $(x_k)_p>\frac{1}{2},\enspace\forall p\in\Gamma(z),\enspace\forall k>k'.$ Therefore we have $\Gamma(z)\subseteq\Gamma^k$. Furthermore, there is $\Vert \Gamma(z)\Vert_0=n$ and $\Gamma(z)=\Gamma^k$ holds for all k $\geqslant k_1:=\max\left\{k_0,k'\right\}$. 
Consequently, $z$ is also a global minimizer of (\ref{assignment}), which completes the proof.
\end{proof}
\end{appendices}

\end{document}